\newtheorem{proposition}{Proposition}
\newtheorem{theorem}{Theorem}
\newtheorem{lemma}{Lemma}
\theoremstyle{definition}
\newtheorem{definition}{Definition}
\newtheorem{example}{Example}
\newtheorem{remark}{Remark}
\newcommand{\ind}[1]{\mathbbm{1}_{\left\{#1\right\}}}
\newcommand{\ceil}[1]{\left\lceil#1\right\rceil}
\newcommand{\map}[3]{#1 : #2 \longrightarrow #3}
\newcommand{\set}[2]{\left\{#1 : #2\right\}}
\newcommand{\sett}[2]{\left(#1 : #2\right)}
\newcommand{\defeq}{\vcentcolon=}
\newcommand{\bq}{\boldsymbol{q}}
\newcommand{\bX}{\boldsymbol{X}}
\newcommand{\bY}{\boldsymbol{Y}}
\newcommand{\calN}{\mathcal{N}}
\newcommand{\calS}{\mathcal{S}}
\newcommand{\N}{\mathbbm{N}}
\newcommand{\e}{\mathrm{e}}
\newcommand{\expect}{E\expectarg}
\DeclarePairedDelimiterX{\expectarg}[1]{[}{]}{%
	\ifnum\currentgrouptype=16 \else\begingroup\fi
	\activatebar#1
	\ifnum\currentgrouptype=16 \else\endgroup\fi
}
\newcommand{\cprob}{P\probarg}
\DeclarePairedDelimiterX{\probarg}[1]{(}{)}{%
	\ifnum\currentgrouptype=16 \else\begingroup\fi
	\activatebar#1
	\ifnum\currentgrouptype=16 \else\endgroup\fi
}
\newcommand{\innermid}{\nonscript\;\delimsize\vert\nonscript\;}
\newcommand{\activatebar}{%
	\begingroup\lccode`\~=`\|
	\lowercase{\endgroup\let~}\innermid 
	\mathcode`|=\string"8000
}
\pgfplotsset{
	compat = 1.16,
	ticklabel style = {font = \footnotesize},
	every axis/.append style = {
		grid style = {dashed, gray, opacity = 0.2},
		label style = {font = \footnotesize}, 
		width = \columnwidth,
		height = 0.618 * 1 * \columnwidth
	}
}
\definecolor{britishracinggreen}{rgb}{0.0, 0.26, 0.15}
\definecolor{bostonuniversityred}{rgb}{0.8, 0.0, 0.0}
\definecolor{ceruleanblue}{rgb}{0.16, 0.32, 0.75}
\definecolor{airforceblue}{rgb}{0.36, 0.54, 0.66}
\definecolor{cadmiumgreen}{rgb}{0.0, 0.42, 0.24}
\definecolor{ao(english)}{rgb}{0.0, 0.5, 0.0}
\definecolor{coolblack}{rgb}{0.0, 0.18, 0.39}
\definecolor{byzantine}{rgb}{0.74, 0.2, 0.64}
\definecolor{alizarin}{rgb}{0.82, 0.1, 0.26}
\definecolor{arsenic}{rgb}{0.23, 0.27, 0.29}
\definecolor{cobalt}{rgb}{0.0, 0.28, 0.67}
\definecolor{amber}{rgb}{1.0, 0.75, 0.0}
\title{Geometric lower bounds for the steady-state occupancy\\ of processing networks with limited connectivity \vspace{\baselineskip}}
\author{
	\begin{tabular}{ccc}
		\normalsize Diego Goldsztajn & \hspace{1cm} & \normalsize Andres Ferragut \\ 
		\small Universidad ORT Uruguay & \hspace{1cm} & \small Universidad ORT Uruguay \\
		\scriptsize\texttt{goldsztajn@ort.edu.uy} & \hspace{1cm} & \scriptsize\texttt{ferragut@ort.edu.uy} \\
	\end{tabular}
}
\date{\vspace{\baselineskip} April 29, 2026}
\begin{document}

	
\maketitle

\noindent\rule{\textwidth}{1pt}

\vspace{2\baselineskip}

\onehalfspacing

\begin{adjustwidth}{0.8cm}{0.8cm}
	\begin{center}
		\textbf{Abstract}
	\end{center}
	
	\vspace{0.3\baselineskip}
	
	\noindent	
	We consider processing networks where multiple dispatchers are connected to single-server queues by a bipartite compatibility graph, modeling constraints that are common in data centers and cloud networks due to geographic reasons or data locality issues. We prove lower bounds for the steady-state occupancy, i.e., the complementary cumulative distribution function of the empirical queue length distribution. The lower bounds are geometric with ratios given by two flexibility metrics: the average degree of the dispatchers and a novel metric that averages the minimum degree over the compatible dispatchers across the servers. Using these lower bounds, we establish that the asymptotic performance of a growing processing network cannot match that of the classic Power-of-$d$ or JSQ policies unless the flexibility metrics approach infinity in the large-scale limit. 
	
	\vspace{\baselineskip}
	
	\small{\noindent \textit{Key words:} load balancing in networks, compatibility graph, geometric lower bounds.}
	
	\vspace{0.3\baselineskip}
	
	\small{\noindent The research presented in this paper was supported by ANII-Uruguay under fellowship PD\_NAC\_2024\_1\_182118 and by AFOSR US under grant FA9550-23-1-0350.} 
\end{adjustwidth}

\newpage


\section{Introduction}
\label{sec: introduction}

Load balancing algorithms play a central role in parallel-processing systems such as data centers and cloud networks, where they distribute the incoming user requests or tasks across the servers. Well-designed load balancing strategies can drastically reduce the delay experienced by users through efficient resource utilization, which has attracted immense attention in the past few decades;  an extensive survey is provided in \cite{van2018scalable}.

Until recently, this attention had focused on the so-called \emph{supermarket model}, where a single dispatcher distributes the incoming tasks across parallel single-server queues that resemble checkout counters. The steady-state performance of this system can be evaluated through the steady-state occupancy $q$, which is the random sequence such that $q(i)$ is the fraction of servers with at least $i$ tasks in steady state. Under exponential assumptions, $q$ has been analyzed for different load balancing policies in the limit as the number of servers and the arrival rate of tasks approach infinity, while the load $\rho < 1$ remains fixed.

For the na\"{i}ve policy that dispatches each task uniformly at random, the limit $q$ of the steady-state occupancy is the deterministic geometric sequence with ratio $\rho$. However, if $d \geq 2$ queues are selected uniformly at random, instead of just one, and the task is sent to the shortest of these $d$ queues, then performance drastically improves since $q$ decays doubly-exponentially instead of geometrically; this remarkable power-of-$d$ property was established in \cite{vvedenskaya1996queueing,mitzenmacher2001power}. The Join-the-Idle-Queue (JIQ) and Join-the-Shortest-Queue (JSQ) policies perform even better since $q(1) = \rho$ and $q(i) = 0$ for all $i \geq 2$; see \cite{lu2011join,stolyar2015pull,mukherjee2018universality}. The JSQ policy is not only natural but also optimal in the pre-limit under exponential assumptions, as shown in \cite{menich1991optimality,sparaggis1993extremal}. Yet, the Power-of-$d$ and JIQ policies are better suited for systems with many servers, where JSQ incurs an important communication overhead.

Recently, the spotlight has shifted from the supermarket model to models that capture the distributed nature of cloud networks and their data locality issues. In particular, tasks cannot be dispatched to any server in the network, but only to the subset of servers that are sufficiently near and have preinstalled the data or machine learning model required to complete the task. Such \emph{compatibility constraints} were first modeled by a graph connecting the servers in \cite{gast2015power,mukherjee2018asymptotically,budhiraja2019supermarket} and then by a bipartite graph between dispatchers and servers in \cite{weng2020boptimal,rutten2022load,rutten2024mean,zhao2022exploiting,zhao2023optimal}, where each dispatcher represents a type of task. The focus has been on finding connectivity conditions for a sequence of graphs or bipartite graphs such that the limitng steady-state occupancy is as for Power-of-$d$ or JSQ in the supermarket model. 

In this paper we consider the bipartite graph model, which is more general and versatile, and we prove results in the converse direction: instead of deriving sufficient conditions for a near-ideal limiting performance, we obtain \emph{necessary conditions}. Our results are based on novel \emph{flexibility metrics} that capture the diversity in how dispatchers are connected to servers. We derive lower bounds for the steady-state occupancy that hold in the pre-limit and depend geometrically on these metrics. We further prove that these lower bounds hold asymptotically for sequences of networks where any of the flexibility metrics has a finite limit. Therefore, it is necessary that the flexibility metrics diverge to match the limiting performance of the classic Power-of-$d$ or JSQ policies. One of the flexibility metrics is the average degree of the dispatchers, which is assumed to diverge in \cite{weng2020boptimal,rutten2022load,rutten2024mean,zhao2022exploiting,zhao2023optimal, mukherjee2018asymptotically, budhiraja2019supermarket}. Our results rigorously prove that this assumption is actually necessary for the latter results.

The remainder of the paper is organized as follows. In Section \ref{sec: model description} we describe the bipartite graph model formally. In Section \ref{sec: main results} we introduce the flexibility metrics and  state our main results. In Section \ref{sec: monotone transformations} we define monotone network transformations  that are used in the proofs of the main results. In Sections \ref{sec: first lower bound} and \ref{sec: second lower bound} we prove the main results. A brief conclusion is given in Section \ref{sec: conclusion} and some intermediate results are proved in Appendix \ref{app: proofs of several results}.

\section{Model description}
\label{sec: model description}

We consider networks represented by bipartite graphs $G = (D, S, E)$. The finite sets $D$ and $S$ represent dispatchers and servers, respectively, and compatibility constraints are encoded in $E \subset D \times S$, i.e., dispatcher $d$ can only send tasks to server $u$ if $(d, u) \in E$. Naturally, we assume that each dispatcher is compatible with at least one server and each server is compatible with at least one dispatcher, i.e., $G$ does not have isolated nodes. The dispatchers need not represent actual load balancing devices, but rather correspond to specific streams of tasks that are distributed among common subsets of servers. In practice, the servers that are compatible with a given task are those that are geographically close to the user that requested the task and have the data required to perform the task.

We assume that each dispatcher $d$ receives tasks as an independent Poisson process of intensity $\lambda(d)$, and consider the natural load balancing policy such that each dispatcher sends every incoming task to the compatible server with the shortest queue, with ties broken uniformly at random. Tasks are executed sequentially at each server $u$ and have independent and exponentially distributed service times with rate $\mu(u)$. If we let $\bX(t, u)$ be the number of tasks in server $u$ at time $t$, then the latter assumptions imply that the stochastic process $\bX$ is a continuous-time Markov chain with values in $\N^S$.

\begin{definition}
	\label{def: load balancing process}
	We say that $\bX$ is the \emph{load balancing process} associated with the bipartite graph $G = (D, S, E)$ and the rate functions $\map{\lambda}{D}{(0, \infty)}$ and $\map{\mu}{S}{(0, \infty)}$.
\end{definition}

We are interested in the stationary behavior of load balancing processes. A sufficient condition for the ergodicity of such processes can be obtained from \cite[Theorem 2.5]{foss1998stability}. Specifically, let $\bX$ be as before and let $\calN(d) \defeq \set{u \in S}{(d, u) \in E}$ denote the set of servers that are compatible with some dispatcher $d$. Then the condition 
\begin{equation}
	\label{eq: ergodicity condition}
	\sum_{\calN(d) \subset U} \lambda(d) < \sum_{u \in U} \mu(u) \quad \text{for all} \quad \emptyset \neq U \subset S
\end{equation}
implies that $\bX$ is ergodic. Conversely, suppose that there exists some $U \subset S$ such that the strict inequality holds in the opposite direction. Then \cite[Theorem 2.7]{foss1998stability} implies that the process $\bX$ is not ergodic. Moreover, if $\set{\tau_k}{k \geq 1}$ denote the arrival times of tasks to the system, then the following instability property holds with probability one:
\begin{equation*}
	\liminf_{k \to \infty} \frac{1}{k}\sum_{u \in U} \bX(\tau_k, u) > 0.
\end{equation*}

One of the simplest load balancing processes is that where a single dispatcher distributes tasks across servers with the same service rate, as defined below.

\begin{definition}
	\label{def: simple load balancing process}
	Consider a network $G = (D, S, E)$ with $D$ a singleton and $E = D \times S$, and let the rate functions $\lambda$ and $\mu$ be constant. The load balancing process associated with these bipartite graph and rate functions is called \emph{simple} and its load is $\rho \defeq \lambda / \mu$.
\end{definition}

The simple load balancing process corresponds to the standard supermarket model where the dispatcher uses the JSQ policy, and is ergodic if and only if $\rho < |S|$.

\subsection{Steady-state occupancy}

The performance of a load balancing system is typically evaluated by considering the occupancy process $\bq$ associated with $\bX$. Namely, we define
\begin{equation*}
	\bq(t, i) \defeq \frac{1}{|S|}\sum_{u \in S} \ind{\bX(t, u) \geq i} \quad \text{for all} \quad t \geq 0 \quad \text{and} \quad i \in \N,
\end{equation*}
which represents the fraction of servers with at least $i$ tasks at time $t$. In general, $\bq$ is not a Markov process because $\bq(t)$ contains less information than $\bX(t)$ about the state of the system at time $t$. However, we may define the steady-state occupancy as follows.

\begin{definition}
	\label{def: occupancy state}
	Suppose that $\bX$ is an ergodic load balancing process and let $X$ denote its stationary distribution. The \emph{steady-state occupancy} is the random sequence defined as
	\begin{equation*}
		q(i) \defeq \frac{1}{|S|}\sum_{u \in S} \ind{X(u) \geq i} \quad \text{for all} \quad i \in \N.
	\end{equation*} 
\end{definition}

It is clear that a system for which $q(i)$ is smaller has fewer queues of length larger than $i$ in steady state, and therefore offers a better performance. Moreover,
\begin{equation*}
	|S|\sum_{i = 1}^\infty q(i) = \sum_{i = 1}^\infty i|S|\left[q(i) - q(i + 1)\right]
\end{equation*}
is the steady-state total number of tasks in the system; its mean is proportional to the mean sojourn time of tasks by Little's law. Indeed, the summand on the right-hand side is the total number of tasks at servers with exactly $i$ tasks in steady state.

The following proposition is proved in Appendix \ref{app: proofs of several results} by coupling a simple load balancing process with a single-server queue, and provides a geometric lower bound for $\expect*{q(i)}$.

\begin{proposition}
	\label{prop: simple load balancing processes}
	Let $\bX$ be a simple and ergodic load balancing process with load $\rho$. Then the steady-state occupancy associated with $\bX$ satisfies that
	\begin{equation*}
		\expect*{q(i)} \geq \frac{\left[r(\rho, |S|)\right]^i}{|S|} \quad \text{for all} \quad i \in \N, \quad \text{where} \quad r(\rho, x) \defeq \left(\frac{\rho}{x}\right)^x.
	\end{equation*}
\end{proposition}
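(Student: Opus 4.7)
The plan is to lower bound $\expect*{q(i)}$ by first lower bounding the tail of the total number of tasks in the system, and then invoking a pigeonhole argument to convert a total-count bound into an occupancy bound.

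First I would couple the simple load balancing process $\bX$ with an auxiliary $M/M/1$ queue $L$ having arrival rate $\lambda$ and service rate $|S|\mu$. Let $T(t) \defeq \sum_{u \in S} \bX(t,u)$. Both $T$ and $L$ receive arrivals from the same Poisson process of rate $\lambda$. The pooled queue $L$ serves at rate $|S|\mu$ whenever $L > 0$, while in the simple load balancing process the cumulative service rate is $\min(T,|S|)\mu \leq |S|\mu$, since only the occupied servers contribute. A standard coupling on the same arrival and potential-service epochs then gives $T(t) \geq L(t)$ for all $t \geq 0$ almost surely, provided the two processes are started from $T(0) = L(0) = 0$. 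Passing to the stationary regime, which is legitimate since $\bX$ is ergodic and $L$ is ergodic whenever $\rho < |S|$, we obtain the stochastic domination $T \mayorigual L$ in steady state, so that
\begin{equation*}
    \prob\!\left(T \geq k\right) \geq \prob\!\left(L \geq k\right) = \left(\frac{\rho}{|S|}\right)^{k} \quad \text{for all} \quad k \in \N.
\end{equation*}

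Next I would apply the pigeonhole principle: if $T \geq i|S|$, then at least one server must hold at least $i$ tasks, which translates into $q(i) \geq 1/|S|$. Combining this implication with the previous tail bound,
\begin{equation*}
    \expect*{q(i)} \geq \frac{1}{|S|}\prob\!\left(q(i) \geq \frac{1}{|S|}\right) \geq \frac{1}{|S|}\prob\!\left(T \geq i|S|\right) \geq \frac{1}{|S|}\left(\frac{\rho}{|S|}\right)^{i|S|} = \frac{[r(\rho,|S|)]^{i}}{|S|},
\end{equation*}
which is exactly the claimed bound.

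The main obstacle is making the stochastic domination $T \geq L$ rigorous at the level of the stationary distributions; this requires a careful pathwise coupling in which every potential service attempt at the pooled queue is matched with a service attempt in $\bX$, and a verification that extra service capacity in $L$ (when fewer than $|S|$ servers are busy in $\bX$) only reduces $L$ further. Once this coupling is in place, the rest of the argument is the short chain of inequalities above; the pigeonhole step and the geometric tail of the $M/M/1$ are both elementary.
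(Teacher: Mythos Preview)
Your proposal is correct and follows essentially the same approach as the paper's proof: couple the total task count with an $M/M/1$ queue of service rate $|S|\mu$ via shared arrivals and potential departures, then convert the tail bound on the total count into a bound on $\expect*{q(i)}$ by a pigeonhole step. The paper phrases the second step as the union bound $P\bigl(\sum_u X(u) \geq |S|i\bigr) \leq P\bigl(\bigcup_u \{X(u) \geq i\}\bigr) \leq |S|\,P\bigl(X(v) \geq i\bigr)$ together with symmetry, which is equivalent to your inequality $\expect*{q(i)} \geq \tfrac{1}{|S|}\,P\bigl(q(i) \geq 1/|S|\bigr)$.
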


This lower bound is coarse for small $i$. However, it shows that $\expect*{q(i)}$ does not decay faster than geometrically as $i$ increases when the number of servers is finite. In this paper we derive similar bounds for general networks, and obtain necessary conditions for them to remain valid in the limit as the network grows large.

\section{Main results}
\label{sec: main results}

Consider a load balancing process $\bX$ given by a bipartite graph $G = (D, S, E)$ and rate functions $\map{\lambda}{D}{(0, \infty)}$ and $\map{\mu}{S}{(0, \infty)}$. Also, fix $\lambda_0$ and $\mu_0$ such that:
\begin{equation}
	\label{eq: rate bounds}
	0 < \lambda_0 \leq \min_{d \in D} \lambda(d) \quad \text{and} \quad \max_{u \in S} \mu(u) \leq \mu_0 < \infty, \quad \text{and let} \quad \rho_0 \defeq \frac{\lambda_0}{\mu_0}.
\end{equation}

We denote the neighborhood and degree of a dispatcher $d$ by
\begin{equation*}
	\calN(d) \defeq \set{u \in S}{(d, u) \in E} \quad \text{and} \quad \deg(d) \defeq \left|\calN(d)\right|,
\end{equation*}
respectively, and use analogous notations for the servers. Our main results provide lower bounds for the mean steady-state occupancy in terms of the following metrics.

\begin{definition}
	\label{def: graph metrics}
	The \emph{flexibility metrics} for $G = (D, S, E)$ are
	\begin{equation*}
		 \alpha_G \defeq \frac{1}{|S|}\sum_{u \in S} \min \set{\deg(d)}{d \in \calN(u)} \quad \text{and} \quad \beta_G \defeq \frac{1}{|D|}\sum_{d \in D} \deg(d).
	\end{equation*}
\end{definition}

Intuitively, the degree of a dispatcher measures its flexibility for distributing incoming tasks across different servers, which helps to maintain shorter queues. The metric $\alpha_G$ is the average over the servers of the degree of the least flexible dispatcher to which each server is connected, whereas $\beta_G$ is just the average degree over the dispatchers. The former metric is defined from the perspective of each server considering the compatible dispatcher that tends to be the main source of congestion at the server, whereas the latter metric considers the perspective of the dispatchers and measures the average flexibility. For the bipartite graph of a simple load balancing process, $\alpha_G = \beta_G = |S|$. Hence, our first main result generalizes Proposition \ref{prop: simple load balancing processes} from simple to arbitrary load balancing processes.

\begin{theorem}
	\label{the: first lower bound}
	Suppose that $\bX$ is ergodic and let $q$ be the steady-state occupancy. Then
	\begin{equation*}
		\expect*{q(i)} \geq \frac{\left[r(\rho_0, \alpha_G)\right]^i}{\alpha_G} \quad \text{for all} \quad i \geq \frac{1}{\rho_0},
	\end{equation*}
	where $r$ is defined as in Proposition \ref{prop: simple load balancing processes}.
\end{theorem}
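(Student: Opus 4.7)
The plan is to reduce, server by server, to a simple load balancing process where Proposition \ref{prop: simple load balancing processes} applies, and then to combine the per-server bounds via Jensen's inequality.

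Fix $u \in S$, let $k_u \defeq \min\set{\deg(d)}{d \in \calN(u)}$, and let $d^*(u) \in \calN(u)$ attain this minimum, so that $\alpha_G = |S|^{-1}\sum_{u \in S} k_u$. Using the monotone network transformations developed in Section \ref{sec: monotone transformations}, I would compare $\bX$ to the auxiliary process $\bX^{(u)}$ obtained from $\bX$ by (i) removing every dispatcher other than $d^*(u)$, (ii) restricting the servers to $\calN(d^*(u))$, (iii) lowering the arrival rate of $d^*(u)$ to $\lambda_0$, and (iv) raising every remaining service rate to $\mu_0$. Each modification only weakens the queue-length dynamics, so under a synchronous coupling the marginal of $X(u)$ stochastically dominates that of $X^{(u)}(u)$. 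The auxiliary is a simple load balancing process on $k_u$ servers with load $\rho_0$, and it is ergodic because the condition \eqref{eq: ergodicity condition} applied to $U = \calN(d^*(u))$ gives $\lambda_0 \leq \lambda(d^*(u)) < k_u\mu_0$. Its permutation symmetry identifies the marginal at $u$ with $\expect*{q^{(u)}(i)}$, so Proposition \ref{prop: simple load balancing processes} yields
\[
\cprob*{X(u) \geq i} \,\geq\, \expect*{q^{(u)}(i)} \,\geq\, \frac{r(\rho_0, k_u)^i}{k_u}.
\]

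Averaging over $u$ gives $\expect*{q(i)} \geq |S|^{-1}\sum_{u \in S} f(k_u)$ with $f(x) \defeq r(\rho_0, x)^i/x = \rho_0^{ix}\,x^{-ix-1}$, and it remains to apply Jensen's inequality. For this I would establish convexity of $f$ on $(\rho_0, \infty)$, the range in which each $k_u$ must lie by the ergodicity argument above. Using $f''/f = (\log f)'' + ((\log f)')^2$ together with $(\log f)''(x) = (1-ix)/x^2$ and $(\log f)'(x) = i\log(\rho_0/x) - i - 1/x$, convexity reduces to comparing the magnitude of $(\log f)'$ with $\sqrt{\max(0, ix-1)}/x$, which can be pushed through by a direct estimate when $x > \rho_0$. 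This then gives $\expect*{q(i)} \geq f(\alpha_G) = r(\rho_0, \alpha_G)^i/\alpha_G$.

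The main obstacle is the coupling step: each of the four modifications must be paired with a synchronous coupling that provably shrinks the queue at $u$, and juggling which dispatchers and servers each modification affects is the delicate part, although the machinery of Section \ref{sec: monotone transformations} should systematize this. The convexity verification is routine in principle given the explicit formulas above, and the hypothesis $i \geq 1/\rho_0$ in the theorem is expected to emerge as the cleanest sufficient condition making the whole argument go through with the exponent $i\alpha_G$ appearing in the statement.
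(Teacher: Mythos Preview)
Your overall architecture is correct and reaches the same bound as the paper, but the reduction step differs. The paper does a single \emph{global} construction: it applies edge simplification to \emph{every} edge of $G$, obtaining a graph $G_0$ whose connected components are the dispatchers $d$ together with fresh copies $\{u_d : u \in \calN(d)\}$ of their neighbours; Proposition~\ref{prop: monotonicity property} then gives $\cprob{X(u) \geq i} \geq \cprob{X_0(u_d) \geq i}$ simultaneously for every $(d,u) \in E$, and each component is a simple process to which Proposition~\ref{prop: simple load balancing processes} applies. Averaging with weights $\theta(d,u)$ over $d \in \calN(u)$ and applying the convexity of $f$ (Lemma~\ref{lem: properties of r}) yields the bound with $\theta_G$ in place of $\alpha_G$, and choosing $\theta$ to concentrate on the minimum-degree neighbour recovers $\alpha_G$. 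Your approach instead builds a \emph{separate} auxiliary $\bX^{(u)}$ for each server by deleting all dispatchers except $d^*(u)$; this goes straight to the per-server bound $f(k_u)$ without the intermediate $\theta$-parameterization, so it is slightly more direct but forfeits the more general $\theta_G$ inequality that the paper records.

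Two points to tighten. First, ``removing every dispatcher other than $d^*(u)$'' is not one of the transformations in Definition~\ref{def: monotone transformations}: arrival rate decrease keeps $\lambda_2(d) > 0$, and edge simplification redirects rather than deletes. The monotonicity you need is of course true and easy, but you should either prove it directly by a thinning coupling or realise it within the paper's framework by edge-simplifying every edge $(d,w)$ with $d \neq d^*(u)$ and $w \in \calN(d^*(u))$, which isolates $\{d^*(u)\} \cup \calN(d^*(u))$ while keeping the potential departure processes on $\calN(d^*(u))$ mutually independent. Second, the convexity of $f$ on $(\rho_0,\infty)$ is exactly where the hypothesis $i \geq 1/\rho_0$ enters, and your sketch (``can be pushed through by a direct estimate'') is not yet a proof; the paper handles this as Lemma~\ref{lem: properties of r}, showing that $f''$ has no zero in $[\rho_0,\infty)$ precisely under that condition.
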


The proof of the theorem is provided in Section \ref{sec: first lower bound}. In fact, we establish that the same bound holds more generally, with $\alpha_G$ replaced by the following expression:
\begin{equation*}
	\theta_G \defeq \frac{1}{|S|}\sum_{u \in S} \sum_{d \in \calN(u)} \theta(d, u) \deg(d) \geq \alpha_G,
\end{equation*}
where $\map{\theta}{D \times S}{[0, 1]}$ can be any function such that $\theta(d, u)$ sums one over $d \in \calN(u)$ for each fixed $u \in S$, i.e., the inner sum in the expression in the middle is a convex combination of degrees. We prove that the largest lower bound is achieved when we have equality above, which corresponds to $\theta(d, u) = 0$ if $\deg(d) > \min \set{\deg(e)}{e \in \calN(u)}$.

Our second main result provides a geometric lower bound for the steady-state occupancy which is similar to that in Theorem \ref{the: first lower bound} but depends on $\beta_G$ instead of $\alpha_G$.

\begin{theorem}
	\label{the: second lower bound}
	Suppose that $\bX$ is ergodic and let $q$ be the steady-sate occupancy. Then
	\begin{equation*}
		\expect*{q(i)} \geq \frac{\rho_0}{\beta_G(\beta_G + 1) + \rho_0} \frac{\left[r(\rho_0, \beta_G + 1)\right]^i}{\beta_G + 1} \quad \text{for all} \quad i \geq \frac{1}{\rho_0},
	\end{equation*}
	where $r$ is defined as in Proposition \ref{prop: simple load balancing processes}.
\end{theorem}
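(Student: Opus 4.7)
The plan is to parallel the proof of Theorem \ref{the: first lower bound} but from the dispatcher side, so that the dispatcher-averaged degree $\beta_G$ enters in place of $\alpha_G$. First, using the monotone transformations from Section \ref{sec: monotone transformations}, I would reduce to the case $\lambda \equiv \lambda_0$ and $\mu \equiv \mu_0$: slowing arrivals and speeding up services can only decrease queue lengths in a sample-path sense, so any lower bound on the modified steady-state occupancy transfers to a lower bound for the original. After this reduction every edge carries load $\rho_0$.

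Next, for each dispatcher $d$ I would isolate its contribution. The dispatcher $d$ alone sends a Poisson stream of rate $\lambda_0$ through the shortest-queue rule to the $\deg(d)$ servers in $\calN(d)$, which is exactly the dynamics of a simple load balancing process on $\deg(d)$ servers with load $\rho_0$; tasks from other dispatchers can only lengthen queues in $\calN(d)$. By a coupling with a single-server queue analogous to the one underlying Proposition \ref{prop: simple load balancing processes}, I would then obtain a local geometric lower bound on $\sum_{u \in \calN(d)}\cprob{X(u) \geq i}$ with ratio of the form $r(\rho_0, \deg(d) + 1)$; the $+1$ likely arises from having to admit the transitioning server into the coupling, and is consistent with the form of the final bound.

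The next step is to aggregate these local inequalities and average to reach a bound depending on $\beta_G + 1$. Using the double-counting identity $\sum_{d \in D}\sum_{u \in \calN(d)} \cprob{X(u) \geq i} = \sum_{u \in S} \deg(u)\cprob{X(u) \geq i}$ together with the crude bound $\deg(u) \leq |D|$ relates the sum of local lower bounds to $|D|\,|S|\,\expect*{q(i)}$. What remains is to convert the arithmetic mean $\frac{1}{|D|}\sum_{d \in D} [r(\rho_0, \deg(d) + 1)]^i/(\deg(d) + 1)$ into the single quantity $[r(\rho_0, \beta_G + 1)]^i/(\beta_G + 1)$ that appears in the statement.

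This last conversion is the hard part, because $x \mapsto [r(\rho_0, x)]^i/x$ is log-concave rather than convex in $x$ and Jensen's inequality therefore goes the wrong way. The most natural workaround is to perform the averaging at the graph level rather than at the bound level: apply another monotone transformation that regularizes the dispatchers to a common degree essentially equal to $\lceil \beta_G \rceil$ (for example by splitting and merging dispatchers so as to keep the total arrival rate and the value of $\beta_G$ unchanged), and only then execute the per-dispatcher coupling. I expect the prefactor $\rho_0/[\beta_G(\beta_G+1) + \rho_0]$ to capture the slack introduced by this regularization together with the weighted-to-unweighted conversion between $\sum_u \deg(u)\cprob{X(u) \geq i}$ and $|S|\,\expect*{q(i)}$. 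If regularizing in this way proves awkward, a fallback is to avoid averaging altogether and instead write a balance equation for $\cprob{X(u) \geq i}$ at a fixed server, chaining inequalities across $i$ in the spirit of Proposition \ref{prop: simple load balancing processes}.
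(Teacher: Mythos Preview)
Your proposal has a genuine gap: the route from per-dispatcher local bounds to a global bound in terms of $\beta_G$ does not work as sketched, and the fix you suggest is not available. First, a factual error: the map $x \mapsto [r(\rho_0,x)]^i/x$ is \emph{convex} on $[\rho_0,\infty)$ for $i \geq 1/\rho_0$ (this is exactly Lemma~\ref{lem: properties of r}), not log-concave, so Jensen's inequality is not the obstruction you think it is. The real obstruction is that summing the per-dispatcher inequalities gives $\sum_{u} \deg(u)\,\cprob{X(u)\ge i}$ on the left, and your bound $\deg(u)\le |D|$ throws away a factor that can be as large as $|D|$; the resulting inequality does not collapse to anything close to the stated bound. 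Moreover, your proposed repair of ``regularizing'' dispatchers to a common degree by splitting and merging is not a monotone transformation in the sense of Section~\ref{sec: monotone transformations}: the only tools there are arrival rate decrease, service rate increase, and edge simplification, none of which lets you merge dispatchers or equalize dispatcher degrees while preserving $\beta_G$.

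The paper's argument is structurally different and hinges on an idea you are missing: a Markov-inequality truncation on the dispatcher degree distribution. One fixes a threshold $\gamma>\beta_G$, sets $D_\gamma\defeq\{d:\deg(d)<\gamma\}$ and $S_\gamma\defeq\bigcup_{d\in D_\gamma}\calN(d)$, and uses edge simplifications only on edges in $D_\gamma^c\times S_\gamma$ to isolate the subgraph $G_\gamma$ on $(D_\gamma,S_\gamma)$. On $G_\gamma$ every dispatcher has degree at most $\lceil\gamma-1\rceil$, hence $\alpha_{G_\gamma}\le\lceil\gamma-1\rceil$, and Theorem~\ref{the: first lower bound} applied to $G_\gamma$ gives the geometric lower bound with ratio $r(\rho_0,\lceil\gamma-1\rceil)$. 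Markov's inequality then bounds $|D_\gamma^c|/|D|\le\beta_G/\gamma$, which together with $|S_\gamma^c|\le\sum_{d\in D_\gamma^c}\deg(d)\le\beta_G|D|$ and $\rho_0|D_\gamma|\le|S_\gamma|$ controls $|S_\gamma|/|S|$; this is precisely where the prefactor $\rho_0/[\beta_G(\beta_G+1)+\rho_0]$ comes from. Taking $\gamma=\beta_G+1$ yields the stated bound, and the ``$+1$'' is an artefact of this choice of threshold, not of any coupling subtlety.
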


This theorem is proved in Section \ref{sec: second lower bound} and complements Theorem \ref{the: first lower bound}. Indeed, the following example shows that $\alpha_G / \beta_G$ can be arbitrarily small or large, and we prove in Section \ref{sec: first lower bound} that for $\rho_0$ and $i$ fixed, the function $x \mapsto [r(\rho_0, x)]^i / x$ is decreasing over the interval $[\rho_0, \infty)$. This implies that the lower bound provided in Theorem \ref{the: first lower bound} can be much larger than that given in Theorem \ref{the: second lower bound}, and that the opposite situation is possible as well.

\begin{example}
	\label{ex: flexibility metrics} The bipartite graphs $G_n^1$ and $G_n^2$ depicted in Figure \ref{fig: examples} satisfy that:
	\begin{equation*}
		\alpha_{G_n^1} = 1, \quad \beta_{G_n^1} = \frac{n + 1}{2}, \quad \alpha_{G_n^2} = \frac{n + 1}{2} \quad \text{and} \quad \beta_{G_n^2} = \frac{2n}{n + 1}. 
	\end{equation*}
	Therefore, $\alpha_{G_n^1} / \beta_{G_n^1} \to 0$ and $\alpha_{G_n^2} / \beta_{G_n^2} \to \infty$ as $n \to \infty$. In both graphs a significant fraction of the dispatchers is compatible with just one server, and thus has little flexibility. This is better captured by $\alpha_{G_n^1}$ in the first graph and by $\beta_{G_n^2}$ in the second graph.
\end{example}

\begin{figure}
	\centering
	\begin{subfigure}{0.49\columnwidth}
		\centering
		\includegraphics{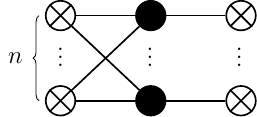}
		\subcaption{Bipartite graph $G_n^1$}
	\end{subfigure}
	\hfill
	\begin{subfigure}{0.49\columnwidth}
		\centering
		\includegraphics{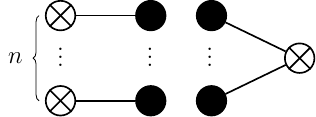}
		\subcaption{Bipartite graph $G_n^2$}
	\end{subfigure}
	\caption{Dispatchers and servers are represented by crossed white circles and black circles, respectively. Each server in $G_n^1$ is connected to all the dispatchers at its left and only one dispatcher at its right. The connected component on the right of $G_n^2$ consists of one dispatcher connected to $n$ servers, whereas each of the other $n$ connected components consists of one dispatcher and one server.}
	\label{fig: examples}
\end{figure}

An important consequence of Theorems \ref{the: first lower bound} and \ref{the: second lower bound} is the following result, which concerns a growing sequence of processing networks and provides a geometric lower bound for the limiting steady-state occupancy when the flexibility metrics remain bounded.

\begin{theorem}
	\label{the: sequences of graphs}
	Consider a sequence of bipartite graphs $G_n = (D_n, S_n, E_n)$ with
	\begin{equation*}
		\alpha \defeq \liminf_{n \to \infty} \alpha_{G_n} \quad \text{and} \quad \beta \defeq \liminf_{n \to \infty} \beta_{G_n}.
	\end{equation*}
	Also, fix sequences of rate functions $\map{\lambda_n}{D_n}{(0, \infty)}$ and $\map{\mu_n}{S_n}{(0, \infty)}$ such that
	\begin{equation*}
		\liminf_{n \to \infty} \min_{d \in D_n} \lambda_n(d) > \lambda_0 > 0 \quad \text{and} \quad \limsup_{n \to \infty} \max_{u \in S_n} \mu_n(u) < \mu_0 < \infty.
	\end{equation*}
	Suppose that the load balancing processes associated with these bipartite graphs and rate functions are ergodic and let $\rho_0 \defeq \lambda_0 / \mu_0$. Then the steady-state occupancies satisfy:
	\begin{equation*}
		\liminf_{n \to \infty} \expect*{q_n(i)} \geq \max\left\{\frac{\left[r(\rho_0, \alpha)\right]^i}{\alpha}, \frac{\rho_0}{\beta(\beta + 1) + \rho_0}\frac{\left[r(\rho_0, \beta + 1)\right]^i}{\beta + 1}\right\} \quad \text{for all} \quad i \geq \frac{1}{\rho_0}.
	\end{equation*} 
\end{theorem}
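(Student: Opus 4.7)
The plan is to derive Theorem~\ref{the: sequences of graphs} as a direct corollary of Theorems~\ref{the: first lower bound} and~\ref{the: second lower bound}, applied to each network $G_n$ in the sequence, and then passing to the limit $n \to \infty$. The first step is to ensure that the uniform constants $(\lambda_0,\mu_0)$ can serve as admissible bounds in the sense of \eqref{eq: rate bounds} for each $(\lambda_n,\mu_n)$. The strict inequalities $\liminf_n \min_{d \in D_n} \lambda_n(d) > \lambda_0$ and $\limsup_n \max_{u \in S_n} \mu_n(u) < \mu_0$ imply the existence of an index $n_0$ from which onward $\min_d \lambda_n(d) \geq \lambda_0$ and $\max_u \mu_n(u) \leq \mu_0$ hold simultaneously. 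Hence for every $n \geq n_0$ both Theorems~\ref{the: first lower bound} and~\ref{the: second lower bound} apply to $G_n$ with the common value $\rho_0 = \lambda_0/\mu_0$, yielding, for every $i \geq 1/\rho_0$, two pre-limit geometric lower bounds on $\expect*{q_n(i)}$.

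Next I would pass to the limit. Introduce the bound functions
\begin{equation*}
    F_\alpha(x) \defeq \frac{[r(\rho_0,x)]^i}{x}, \qquad F_\beta(x) \defeq \frac{\rho_0}{x(x+1)+\rho_0}\cdot\frac{[r(\rho_0,x+1)]^i}{x+1},
\end{equation*}
so that the pre-limit bounds read $\expect*{q_n(i)} \geq F_\alpha(\alpha_{G_n})$ and $\expect*{q_n(i)} \geq F_\beta(\beta_{G_n})$ for every $n \geq n_0$. Both $F_\alpha$ and $F_\beta$ are continuous on $[\rho_0,\infty)$ and strictly decreasing there (the monotonicity of $F_\alpha$ is established in Section~\ref{sec: first lower bound}, and an entirely parallel calculation yields that of $F_\beta$), and both extend continuously to $+\infty$ with value $0$. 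Extracting a subsequence along which $\alpha_{G_n}$ converges to $\alpha$ (which exists by the definition of $\liminf$) and a further one along which $\beta_{G_n}$ converges to $\beta$, continuity of the bound functions transfers the pre-limit inequalities to the limit, and taking the maximum of the two resulting bounds gives the right-hand side of the theorem.

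The main obstacle will be the careful handling of the full-sequence $\liminf$ on the left-hand side, given that $F_\alpha$ and $F_\beta$ are \emph{decreasing}: the pre-limit bound is tightest precisely at indices where the flexibility metric is closest to its $\liminf$, rather than being uniform over the sequence. I expect to address this through an $\varepsilon$-approximation based on the definition of $\liminf$: for every $\alpha' > \alpha$, infinitely many indices satisfy $\alpha_{G_n} < \alpha'$ and hence $\expect*{q_n(i)} \geq F_\alpha(\alpha')$ by Theorem~\ref{the: first lower bound} and monotonicity of $F_\alpha$, after which letting $\alpha' \downarrow \alpha$ and invoking continuity of $F_\alpha$ yields the limiting bound; the same reasoning handles the $\beta$-term. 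The degenerate cases $\alpha = +\infty$ or $\beta = +\infty$ are trivial since the corresponding term on the right-hand side vanishes by the continuous extension of the bound functions at infinity.
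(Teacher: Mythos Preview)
Your overall strategy—apply Theorems~\ref{the: first lower bound} and~\ref{the: second lower bound} to each $G_n$ once $n \geq n_0$ and then pass to the limit—matches the paper's proof exactly. The paper simply invokes continuity of $x \mapsto [r(\rho_0,x)]^i/x$ and asserts the limiting inequality without further discussion.

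You correctly flag a subtlety the paper glosses over: since $F_\alpha$ and $F_\beta$ are \emph{decreasing}, the pre-limit bound $\expect*{q_n(i)} \geq F_\alpha(\alpha_{G_n})$ is weakest precisely along subsequences where $\alpha_{G_n}$ is large. Your proposed fix, however, does not close the gap. Knowing that $\alpha_{G_n} < \alpha'$ for \emph{infinitely many} $n$ gives $\expect*{q_n(i)} \geq F_\alpha(\alpha')$ only along a subsequence, which yields $\limsup_n \expect*{q_n(i)} \geq F_\alpha(\alpha')$, not the $\liminf$ the theorem claims. In general, for continuous decreasing $F$ one has $\liminf_n F(\alpha_{G_n}) = F\bigl(\limsup_n \alpha_{G_n}\bigr)$, so the most that Theorem~\ref{the: first lower bound} alone delivers is $\liminf_n \expect*{q_n(i)} \geq F_\alpha\bigl(\limsup_n \alpha_{G_n}\bigr)$, which equals $F_\alpha(\alpha)$ only if $\alpha_{G_n}$ converges. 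A sequence alternating between a single dispatcher connected to one server and a single dispatcher connected to $n$ servers (so $\alpha_{G_n}=\beta_{G_n}$ alternates between $1$ and $n$) shows that the full-sequence $\liminf$ of $\expect*{q_n(i)}$ can be zero while the right-hand side of the theorem is positive. Neither your argument nor the paper's resolves this; the statement appears to require $\alpha,\beta$ defined via $\limsup$ (or $\limsup$ on the left-hand side) to be provable by this route.
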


\begin{proof}
	By assumption, there exists $n_0 \geq 1$ such that
	\begin{equation*}
		\min_{d \in D_n} \lambda_n(d) > \lambda_0 \quad \text{and} \quad \max_{u \in S_n} \mu_n(u) < \mu_0 \quad \text{for all} \quad n \geq n_0.
	\end{equation*}
	As a result, Theorems \ref{the: first lower bound} and \ref{the: second lower bound} imply that if $n \geq n_0$, then
	\begin{equation*}
		\expect*{q_n(i)} \geq \max\left\{\frac{\left[r(\rho_0, \alpha_{G_n})\right]^i}{\alpha_{G_n}}, \frac{\rho_0}{\beta_{G_n}\left(\beta_{G_n} + 1\right) + \rho_0}\frac{\left[r(\rho_0, \beta_{G_n} + 1)\right]^i}{\beta_{G_n} + 1}\right\} \quad \text{for all} \quad i \geq \frac{1}{\rho_0}.
	\end{equation*}
	Since $x \mapsto [r(\rho_0, x)]^i / x$ is continuous in $(0, \infty)$ and $\min\{\alpha_{G_n}, \beta_{G_n}\} \geq 1$, we get
	\begin{equation*}
		\liminf_{n \to \infty} \expect*{q_n(i)} \geq \max\left\{\frac{\left[r(\rho_0, \alpha)\right]^i}{\alpha}, \frac{\rho_0}{\beta\left(\beta + 1\right) + \rho_0}\frac{\left[r(\rho_0, \beta + 1)\right]^i}{\beta + 1}\right\} \quad \text{for all} \quad i \geq \frac{1}{\rho_0},
	\end{equation*}
	which completes the proof.
\end{proof}

If $\alpha$ or $\beta$ are finite, then this result implies that the mean steady-state occupancy does not decay faster than geometrically in the limit, which gives a partial converse for results in \cite{weng2020boptimal,rutten2022load,rutten2024mean,zhao2022exploiting,zhao2023optimal, mukherjee2018asymptotically, budhiraja2019supermarket}. Loosely speaking, these papers consider sequences of networks with suitable connectivity properties and prove mean-field limits showing that the steady-state occupancy behaves as if the bipartite graphs where complete in the limit. The connectivity properties imply that $\beta = \infty$ and the limiting steady-state occupancy decays faster than geometrically. Theorem \ref{the: sequences of graphs} implies that such mean-field limits are not possible if $\beta < \infty$.

\section{Monotone transformations}
\label{sec: monotone transformations}

In this section we define transformations of bipartite graphs and rate functions such that the load balancing process associated with the transformed bipartite graph and rate functions has shorter queues in a stochastic sense, a property that plays an important role in the proof of Theorem \ref{the: first lower bound}. We remark that the transformations considered here where first introduced in \cite{goldsztajn2024server} and that the monotonicity property was proved there.

Let $\bX_1$ be a load balancing process associated with a bipartite graph $G_1 = (D_1, S_1, E_1)$ and rate functions $\map{\lambda_1}{D_1}{(0, \infty)}$ and $\map{\mu_1}{S_1}{(0, \infty)}$. One of the transformations that we consider involves coupling the potential departure processes of certain servers, and we need to apply this transformation multiple times to prove Theorem \ref{the: first lower bound}. We thus assume that some servers may have the same potential departure process and associate with $\bX_1$ a partition $\calS_1$ of $S_1$ such that all the servers in $U \in \calS_1$ have the same potential departure process. Specifically, the servers in $U$ have potential departures at the jump times of some common Poisson process and a potential departure leads to an actual departure if the server is not idle. Clearly, we must have $\mu_1(u) = \mu_1(v)$ if $u, v \in U$ and $U \in \calS_1$.

The bipartite graph and rate functions obtained after some given transformation are denoted by $G_2 = (D_2, S_2, E_2)$, $\map{\lambda_2}{D_2}{(0, \infty)}$ and $\map{\mu_2}{S_2}{(0, \infty)}$. The partition of $S_2$ indicating the servers with a common potential departure process is denoted by $\calS_2$ and the associated load balancing process is denoted by $\bX_2$.

\begin{definition}
	\label{def: monotone transformations}
	We consider the following transformations.
	\begin{itemize}
		\item \textit{Arrival rate decrease.} The arrival rate of tasks is decreased for some dispatchers. Specifically, $\lambda_1(d) \geq \lambda_2(d)$ for all $d \in D_1$ while $G_2 \defeq G_1$, $\calS_2 \defeq \calS_1$ and $\mu_2 \defeq \mu_1$.
		
		\item \textit{Service rate increase.} The service rate of tasks is increased for some servers. Namely, $\mu_1(u) \leq \mu_2(u)$ for all $u \in S_1$ while $G_2 \defeq G_1$, $\calS_2 \defeq \calS_1$ and $\lambda_2 \defeq \lambda_1$.
		
		\item \textit{Edge simplification.} A compatibility relation $(d, u) \in E_1$ is removed while a server $v \notin S_1$ and the compatibility relation $(d, v)$ are incorporated. Specifically,
		\begin{equation*}
			D_2 \defeq D_1, \quad S_2 \defeq S_1 \cup \{v\} \quad \text{and} \quad E_2 \defeq \left(E_1 \setminus \left\{(d, u)\right\}\right) \cup \left\{(d, v)\right\}.
		\end{equation*}
		The potential departure process of $v$ is the same as for $u$. Namely, suppose that $U$ is the element of the partition $\calS_1$ such that $u \in U$. Then we let
		\begin{equation*}
			\calS_2 \defeq \left(\calS_1 \setminus \left\{U\right\}\right) \cup \left\{U \cup \{v\}\right\} \quad \text{and} \quad \mu_2(v) \defeq \mu_1(u).
		\end{equation*}
		Further, $\lambda_2(e) \defeq \lambda_1(e)$ for all $e \in D_2$ and $\mu_2(w) \defeq \mu_1(w)$ for all $w \in S_1$.
	\end{itemize}
\end{definition}

\begin{remark}
	\label{rem: edge simplification that creates isolated servers}
	Suppose that $u \in S_1$ is only compatible with $d \in D_1$ and consider the edge simplification that removes $(d, u)$ and adds the server $v$. Then $u$ is an isolated server in $G_2$ and the process $\set{\bX_2(w)}{u \neq w \in S_2}$ is statistically identical to $\bX_1$. In this particular situation, the edge simplification, as defined above, creates an isolated server in $G_2$, but the rest of this network behaves exactly as the original network $G_1$. In order to preserve the standing assumption that the bipartite graphs do not have isolated nodes, we adopt the convention that the edge simplification is the identity transformation when $\deg(u) = 1$, i.e., $G_2 \defeq G_1$, $\calS_2 \defeq \calS_1$, $\lambda_2 \defeq \lambda_1$, $\mu_2 \defeq \mu_1$, and in particular $\bX_2 \defeq \bX_1$.
\end{remark}

An illustrative example of the edge simplification transformation is depicted in Figure \ref{fig: edge simplification}. Using this transformation cleverly, we may obtain load balancing processes that are easier to analyze. Particularly, a server connected to several dispatchers can be transformed into several servers connected to different dispatchers by applying several edge simplifications. Combined with the monotonicity property stated next, this provides a powerful tool for stochastically bounding the queue lengths of the servers from below. This property is a direct extension of the results in \cite[Section 7.2]{goldsztajn2024server} and is proved in Appendix \ref{app: proofs of several results}.

\begin{figure}
	\centering
	\begin{subfigure}{0.49\columnwidth}
		\centering
		\includegraphics{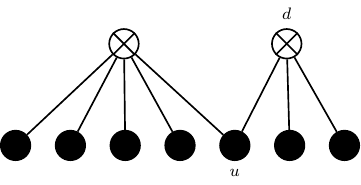}
		\caption{Bipartite graph $G_1$, before edge simplification}
	\end{subfigure}%
	\hfill
	\begin{subfigure}{0.49\columnwidth}
		\centering
		\includegraphics{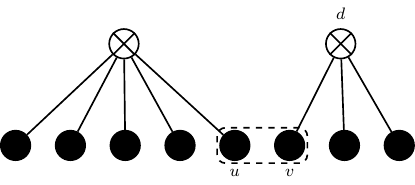}
		\caption{Bipartite graph $G_2$, after edge simplification}
	\end{subfigure}
	\caption{Edge simplification that removes the compatibility relation $(d, u)$ and incorporates server $v$ and the compatibility relation $(d, v)$. The servers $u$ and $v$ have the same potential departure process.}
	\label{fig: edge simplification}
\end{figure}

\begin{proposition}
	\label{prop: monotonicity property}
	Suppose that $\bX_2$ is obtained from $\bX_1$ through arrival rate decrease or service rate increase transformations, and that the random vectors $\bX_1(0)$ and $\bX_2(0)$ which describe the initial conditions are identically distributed. Then
	\begin{equation}
		\label{eq: monotonicity for rate modifications}
		\cprob*{\bX_1(t, u) \geq i} \geq \cprob*{\bX_2(t, u) \geq i} \quad \text{for all} \quad t \geq 0, \quad u \in S_1 \quad \text{and} \quad i \in \N.
	\end{equation}
	Suppose now that $\bX_2$ is obtained from $\bX_1$ by means of the edge simplification that removes the compatibility relation $(d, u)$ and incorporates server $v$. Assume also that $\bX_1(0)$ and $\sett{\bX_2(0, w)}{w \in S_1}$ are identically distributed and $\bX_2(0, u) \geq \bX_2(0, v)$ with probability one. Then the following inequalities hold:
	\begin{equation}
		\label{eq: monotonicity for edge simplification}
		\cprob*{\bX_1(t, u) \geq i} \geq \cprob*{\bX_2(t, v) \geq i} \quad \text{and} \quad \cprob*{\bX_1(t, w) \geq i} \geq \cprob*{\bX_2(t, w) \geq i}
	\end{equation}
	for all $t \geq 0$, $w \in S_1$ and $i \in \N$. In addition, if $\bX_1$ is ergodic, then $\bX_2$ is ergodic in either case. Furthermore, then \eqref{eq: monotonicity for rate modifications} and \eqref{eq: monotonicity for edge simplification} hold for the stationary distributions.
\end{proposition}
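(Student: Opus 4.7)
The plan is to build, for each of the three transformations, an explicit coupling of $\bX_1$ and $\bX_2$ on a common probability space driven by shared Poisson processes, and to maintain pathwise an appropriate coordinatewise order between the coupled processes. For the \emph{arrival rate decrease}, thin the arrival process of every dispatcher $d$ in $\bX_1$ by keeping each arrival with probability $\lambda_2(d)/\lambda_1(d)$; the kept arrivals drive $\bX_2$, while all arrivals drive $\bX_1$. For the \emph{service rate increase}, proceed symmetrically with potential departures: a Poisson process of rate $\mu_2(u)$ drives $\bX_2$ and a thinning of rate $\mu_1(u)$ drives $\bX_1$. In both cases, the routing decisions in each system use its own queue lengths, and an induction on the ordered sequence of jump times shows that the invariant $\bX_1(t,u) \geq \bX_2(t,u)$ for all $u \in S_1$ is preserved. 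The only nontrivial events are those where the two systems might send an extra arrival to different servers (arrival decrease) or trigger a departure from only one system (service increase); in each such event a direct case analysis on the JSQ tie-breaking confirms the invariant is maintained.

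For the \emph{edge simplification} transformation the coupling is subtler, and this is where I expect the main work. Couple $u$ and $v$ to share a single potential departure process (consistent with $\calS_2$) and use identical Poisson arrival processes for each dispatcher in both systems, only changing how the arrivals of dispatcher $d$ are routed. The target invariant is
\begin{equation*}
    \bX_1(t,u) \geq \bX_2(t,v) \quad \text{and} \quad \bX_1(t,w) \geq \bX_2(t,w) \quad \text{for all} \quad w \in S_1.
\end{equation*}
The assumption $\bX_2(0,u) \geq \bX_2(0,v)$ plays a key role: combined with the shared potential departures and with the observation that $v$ may only receive arrivals that in $\bX_1$ would have been routed to $u$ or to a server $w\in\calN_1(d)\setminus\{u\}$, it yields the auxiliary bound $\bX_2(t,u) \geq \bX_2(t,v)$ throughout. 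The verification of the invariant reduces to a case analysis at each event. For an arrival at a dispatcher $e \neq d$ the neighborhoods agree; letting $w_1,w_2$ denote the JSQ targets, the invariant gives $\bX_2(t-,w_2) \leq \bX_2(t-,w_1) \leq \bX_1(t-,w_1) \leq \bX_1(t-,w_2)$, and ties are coupled to select the same server so the strict inequality needed whenever $w_1\neq w_2$ is automatic. For an arrival at $d$, we compare the JSQ over $\calN_1(d)$ against that over $(\calN_1(d)\setminus\{u\})\cup\{v\}$; pairing $u$ with $v$ and using the invariant together with $\bX_2(t,u)\geq\bX_2(t,v)$ ensures that whichever server is chosen in $\bX_2$, the invariant survives. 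For a potential departure at a server in $\calS_1\setminus\{U\}$ or in $U\cup\{v\}$ the shared driving process guarantees that an actual departure only occurs in $\bX_1$ at the corresponding server when it is nonempty; emptiness of the $\bX_2$ counterpart then preserves the ordering.

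Once the pathwise inequalities are established, ergodicity of $\bX_2$ follows from that of $\bX_1$: the coupling inequality implies that for each $u\in S_1$ the total number of tasks $\sum_{w \in S_2}\bX_2(t,w)$ is dominated (up to the single extra coordinate $v$, which is stochastically bounded by $\bX_1(t,u)$ by the invariant) by $\sum_{u \in S_1}\bX_1(t,u)$, so tightness and the Foster--Lyapunov-type conclusion carry over; alternatively, one can verify the drift condition \eqref{eq: ergodicity condition} directly from the combinatorial change in each transformation. Finally, to pass the inequalities to the stationary distributions, take $\bX_1(0)$ distributed as the stationary law of $\bX_1$ and couple $\bX_2(0)$ as required; the pre-limit inequalities then hold at every time $t$, and since the marginals of $\bX_1(t,u)$ and $\bX_2(t,w)$ each converge in distribution to their stationary counterparts, the inequalities persist in the limit. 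The main obstacle is the bookkeeping of the tie-breaking rules and the auxiliary bound $\bX_2(t,u) \geq \bX_2(t,v)$ in the edge simplification case; everything else reduces to routine coupling arguments of the type already developed in \cite[Section~7.2]{goldsztajn2024server}.
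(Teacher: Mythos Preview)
Your coupling approach is exactly what the paper relies on: it cites \cite[Section~7.2]{goldsztajn2024server} for the pathwise inequalities \eqref{eq: monotonicity for rate modifications} and \eqref{eq: monotonicity for edge simplification}, and those are established there by precisely the kind of shared-Poisson-process constructions you outline. So the strategy is right.

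There is, however, a genuine error in your edge-simplification sketch: the auxiliary inequality $\bX_2(t,u)\geq\bX_2(t,v)$ is false in general. Take $D_1=\{d,e\}$, $S_1=\{u,s\}$, $E_1=\{(d,u),(e,u),(e,s)\}$, with $\lambda(d)$ large and $\lambda(e)$ small. After simplifying $(d,u)$, the server $v$ receives all of $d$'s traffic while $u$ receives only a fraction of $e$'s, so $\bX_2(t,v)$ will typically exceed $\bX_2(t,u)$. The good news is that this auxiliary bound is not needed. The invariant $\bX_1(t,u)\geq\bX_2(t,v)$ and $\bX_1(t,w)\geq\bX_2(t,w)$ for $w\in S_1$ can be maintained directly: at an arrival to $d$, if $\bX_2$ routes to $w_2\in\calN_1(d)\setminus\{u\}$ and $\bX_1$ routes to $u$, the chain $\bX_1(t-,w_2)\geq\bX_1(t-,u)\geq\bX_2(t-,v)\geq\bX_2(t-,w_2)$ (using that $u$ minimizes for $\bX_1$, the invariant for the pair $(u,v)$, and that $w_2$ minimizes for $\bX_2$) gives what you need, with equality forcing a tie in both systems that can be resolved by pairing $u$ with $v$ in the coupled tie-break. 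The other cases are similar. So drop the auxiliary bound and carry out the case analysis with this pairing.

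For the ergodicity and stationary parts your arguments work but are heavier than the paper's. The paper observes that under the coupling, whenever $\bX_1$ hits the all-empty state so does $\bX_2$ (for edge simplification this includes $\bX_2(t,v)\leq\bX_1(t,u)=0$), so positive recurrence of the empty state transfers immediately from $\bX_1$ to $\bX_2$; irreducibility then gives ergodicity. For the stationary inequalities the paper simply takes the Ces\`aro limit $\cprob{X_j(u)\geq i}=\lim_{T\to\infty}T^{-1}\int_0^T\cprob{\bX_j(t,u)\geq i}\,dt$ and passes the finite-time inequality through it, avoiding any need to choose a particular initial law.
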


\section{Proof of Theorem \ref{the: first lower bound}}
\label{sec: first lower bound}

In this section we establish Theorem \ref{the: first lower bound}. For this purpose, we consider the load balancing process $\bX$ of Section \ref{sec: main results} and assume that it is ergodic with steady-state occupancy $q$.

\begin{figure}
	\centering
	\begin{subfigure}{0.49\columnwidth}
		\centering
		\includegraphics{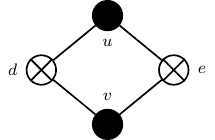}
		\caption{Bipartite graph $G$, before edge simplifications}
	\end{subfigure}%
	\hfill
	\begin{subfigure}{0.49\columnwidth}
		\centering
		\includegraphics{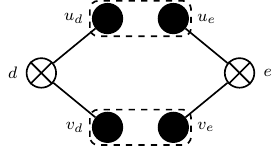}
		\caption{Bipartite graph $G_0$, after edge simplifications}
	\end{subfigure}
	\caption{Bipartite graph $G_0$ obtained after performing an edge simplification at each edge of $G$. Each of the sets of servers $\{u_d, u_e\}$ and $\{v_d, v_e\}$ has a common potential departure process.}
	\label{fig: full edge simplification}
\end{figure}

We perform an edge simplification at all the edges of the bipartite graph $G$, proceeding sequentially and recalling the convention in Remark \ref{rem: edge simplification that creates isolated servers}; see Figure \ref{fig: full edge simplification} for a small example. From these transformations we get the bipartite graph $G_0 = (D_0, S_0, E_0)$ given by
\begin{equation*}
D_0 \defeq D, \quad S_0 \defeq \set{u_d}{(d, u) \in E} \quad \text{and} \quad E_0 \defeq \set{(d, u_d)}{(d, u) \in E},
\end{equation*}
and the sets of servers with common potential departure processes are given by
\begin{equation*}
	\calS_0 \defeq \set{\set{u_d}{d \in \calN(u)}}{u \in S};
\end{equation*}
in this section the notations $\calN$ and $\deg$ always refer to $G$ and not $G_0$.

In other words, observe that the graph $G_0$ has multiple connected components, each consisting of a single dispatcher $d$ connected to several servers $u_d$. The degree in $G_0$ of each dispatcher $d \in D_0 = D$ is the same as in $G$, whereas each server in $G_0$ has degree one. In addition, two servers of the forms $u_d$ and $u_e$ have the same potential departure process, but note that these servers are always in different connected components of $G_0$.

Consider the load balancing process $\bX_0$ given by $G_0$ and the constant rate functions such that all the dispatchers have arrival rate $\lambda_0$ and all the servers have service rate $\mu_0$. It follows from \eqref{eq: rate bounds} that we may obtain $\bX_0$ from $\bX$ by applying an arrival rate decrease to all the dispatchers, a service rate increase to all the servers and multiple edge simplifications. By applying Proposition \ref{prop: monotonicity property} multiple times, we conclude that
\begin{equation}
	\label{eq: stochastic dominance after full edge simplification}
	\cprob*{X(u) \geq i} \geq \cprob*{X_0(u_d) \geq i} \quad \text{for all} \quad (d, u) \in E \quad \text{and} \quad i \in \N,
\end{equation}
where $X$ and $X_0$ are the stationary distributions of $\bX$ and $\bX_0$, respectively. Recall that $\bX$ is ergodic by assumption. Thus, $\bX_0$ is ergodic too by Proposition \ref{prop: monotonicity property}.

Let $\map{\theta}{D \times S}{[0, 1]}$ be any function such that
\begin{equation*}
	\sum_{d \in \calN(u)} \theta(d, u) = 1 \quad \text{for all} \quad u \in S,
\end{equation*}
and recall that $q$ is the steady-state occupancy for $\bX$. By \eqref{eq: stochastic dominance after full edge simplification} and the above property of $\theta$,
\begin{equation*}
	\expect*{q(i)} = \frac{1}{|S|}\sum_{u \in S} \cprob*{X(u) \geq i} \geq \frac{1}{|S|}\sum_{u \in S} \sum_{d \in \calN(u)} \theta(d, u) \cprob*{X_0(u_d) \geq i} \quad \text{for all} \quad i \in \N.
\end{equation*}
The servers $\set{u_d}{u \in \calN(d)}$ and dispatcher $d$ form a connected component of $G_0$ for all $d \in D$, and the potential departure processes of these servers are mutually independent. It follows that $\sett{X_0(u_d)}{u \in \calN(d)}$ is the stationary distribution of a simple load balancing process. Since the aforementioned connected component has arrival rate $\lambda_0$ and $\deg(d)$ servers with service rate $\mu_0$, we conclude from Proposition \ref{prop: simple load balancing processes} that
\begin{equation}
	\label{eq: lower bound in terms of convex combinations}
	\expect*{q(i)} \geq \frac{1}{|S|}\sum_{u \in S} \sum_{d \in \calN(u)} \theta(d, u) \frac{\left[r\left(\rho_0, \deg(d)\right)\right]^i}{\deg(d)},
\end{equation}

In order to complete the proof of Theorem \ref{the: first lower bound}, we need the following technical lemma.

\begin{lemma}
	\label{lem: properties of r}
	Fix constants $\rho > 0$ and $k \geq 1 / \rho$, and consider the function defined as
	\begin{equation*}
		f(x) \defeq \frac{\left[r(\rho, x)\right]^k}{x} = \frac{1}{x}\left(\frac{\rho}{x}\right)^{kx} \quad \text{for all} \quad x > 0.
	\end{equation*}
	This function is strictly decreasing and convex in $[\rho, \infty).$
\end{lemma}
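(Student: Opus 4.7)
\medskip

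\noindent\textbf{Proof proposal.}
The plan is to work with the logarithm of $f$, since the expression $(\rho/x)^{kx}$ is unwieldy but its logarithm is a simple combination of $x\log x$ and $\log x$. Write $g(x) \defeq \log f(x) = -\log x - kx\log(x/\rho)$, so that $f = \e^g$, and differentiate to obtain
\begin{equation*}
g'(x) = -\frac{1}{x} - k - k\log(x/\rho), \qquad g''(x) = \frac{1}{x^2} - \frac{k}{x}.
\end{equation*}

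To show that $f$ is strictly decreasing on $[\rho,\infty)$, it is enough to show that $g'(x) < 0$ there. For $x \geq \rho$ we have $\log(x/\rho) \geq 0$, so $g'(x) \leq -1/x - k < 0$, and hence $f$ is strictly decreasing on $[\rho,\infty)$. This step is essentially immediate and does not use the hypothesis $k \geq 1/\rho$; that hypothesis will enter in the convexity argument.

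For convexity, the key identity is $f''(x) = f(x)\bigl[g'(x)^2 + g''(x)\bigr]$, and since $f > 0$ it suffices to prove that $g'(x)^2 + g''(x) \geq 0$ on $[\rho,\infty)$. The subtlety is that $g''$ is \emph{not} nonnegative on this interval: the assumption $k \geq 1/\rho$ yields $kx \geq k\rho \geq 1$ for $x \geq \rho$, so $g''(x) = (1-kx)/x^2 \leq 0$, meaning $g$ is concave and one cannot simply invoke $g'' \geq 0$. This is the main obstacle: convexity of $f$ must come from the $(g')^2$ term dominating the negative $g''$.

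To resolve this, I would lower bound $|g'(x)|$ on $[\rho,\infty)$. Since $\log(x/\rho) \geq 0$ there, we get $|g'(x)| = 1/x + k + k\log(x/\rho) \geq 1/x + k$, hence
\begin{equation*}
g'(x)^2 + g''(x) \geq \left(\frac{1}{x} + k\right)^2 + \frac{1}{x^2} - \frac{k}{x} = \frac{2}{x^2} + \frac{k}{x} + k^2 > 0.
\end{equation*}
This establishes $f''(x) > 0$ on $[\rho,\infty)$, completing the proof of strict convexity. The whole argument is roughly half a page of elementary calculus; the only conceptual point is recognising that the hypothesis $k \geq 1/\rho$ forces $g$ to be concave and so one must exploit the quadratic term $(g')^2$ to obtain convexity of $f = \e^g$.
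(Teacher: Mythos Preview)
Your proof is correct and in fact cleaner than the paper's. Both arguments begin with the same logarithmic differentiation, writing $f' = fg'$ and $f'' = f\bigl[(g')^2 + g''\bigr]$ (the paper uses the letter $g$ for your $g'$). For the decreasing part, you simply observe that $g'(x)\le -1/x - k < 0$ whenever $x\ge\rho$, whereas the paper instead shows $g'(\rho)<0$ and then uses $k\ge 1/\rho$ to argue that $g'$ is nonincreasing on $[\rho,\infty)$; your route avoids the hypothesis there entirely.

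The real divergence is in the convexity step. You bound $(g')^2 + g''$ from below directly, using $|g'(x)|\ge 1/x + k$ to obtain $(1/x+k)^2 + 1/x^2 - k/x = 2/x^2 + k/x + k^2 > 0$ in one line. The paper instead expands $x^2 f''(x)/f(x)$ as a quadratic polynomial in $x$ with $(\log x)$-dependent coefficients, then rules out any root $x_0\ge\rho$ by a case analysis (treating $a(x_0)=0$ and $a(x_0)>0$ separately, invoking the quadratic formula in the latter case) and finally appeals to continuity and the sign at infinity. Your approach is shorter and more transparent; it also shows that the hypothesis $k\ge 1/\rho$ is in fact not needed anywhere, since your final inequality $2/x^2 + k/x + k^2 > 0$ holds for every $k>0$. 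The paper's argument does invoke $k\ge 1/\rho$, but your proof reveals that this assumption is superfluous for the lemma as stated.
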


The lemma is proved in Appendix \ref{app: proofs of several results} and implies that
\begin{equation}
	\label{eq: lower bound in terms of theta}
	\expect*{q(i)} \geq \frac{\left[r(\rho_0, \theta_G)\right]^i}{\theta_G} \quad \text{for all} \quad i \geq \frac{1}{\rho_0} \quad \text{with} \quad \theta_G \defeq \frac{1}{|S|}\sum_{u \in S} \sum_{d \in \calN(u)} \theta(d, u) \deg(d).
\end{equation}
Indeed, observe that $\deg(d) > \rho_0$ for all $d \in D$ since otherwise the simple load balancing process $\sett{\bX_0(u_d)}{u \in \calN(d)}$ would not be ergodic, contradicting Proposition \ref{prop: monotonicity property}. Since
\begin{equation*}
	\sum_{u \in S} \sum_{d \in \calN(u)} \frac{\theta(d, u)}{|S|} = 1,
\end{equation*}
the convexity property in Lemma \ref{lem: properties of r} gives \eqref{eq: lower bound in terms of theta}. Taking any function $\tilde{\theta}$ such that $\tilde{\theta}(d, u) = 0$ if $\deg(d) > \min \set{\deg(e)}{e \in \calN(u)}$, we obtain $\tilde{\theta}_G = \alpha_G$, and thus
\begin{equation*}
	\expect*{q(i)} \geq \frac{\left[r(\rho_0, \alpha_G)\right]^i}{\alpha_G} \geq \frac{\left[r(\rho_0, \theta_G)\right]^i}{\theta_G} \quad \text{for all} \quad i \geq \frac{1}{\rho_0}
\end{equation*}
and all functions $\theta$. Indeed, the second inequality follows from Lemma \ref{lem: properties of r} and $\rho_0 \leq \alpha_G \leq \theta_G$ for all $\theta$. This completes the proof of Theorem \ref{the: first lower bound}.

\section{Proof of Theorem \ref{the: second lower bound}}
\label{sec: second lower bound}

In this section we prove Theorem \ref{the: second lower bound}. For this purpose, we fix $\gamma > \beta_G$ and define:
\begin{equation*}
	D_\gamma \defeq \set{d \in D}{\deg(d) < \gamma} \subset D \quad \text{and} \quad S_\gamma \defeq \bigcup_{d \in D_\gamma} \calN(d) \subset S.
\end{equation*}
Consider the bipartite graph $G_0 = (D_0, S_0, E_0)$ obtained by applying an edge simplification to each edge in $D_\gamma^c \times S_\gamma$; we use the notations $D_\gamma^c \defeq D \setminus D_\gamma$ and $S_\gamma^c \defeq S \setminus S_\gamma$. Consider also the bipartite graph $G_\gamma = (D_\gamma, S_\gamma, E_\gamma)$ such that $E_\gamma \defeq E \cap (D_\gamma \times S_\gamma)$, which is both a subgraph of $G$ and $G_0$. Furthermore, observe that $G_\gamma$ is isolated from $G_0 \setminus G_\gamma$ and that the potential departure processes of the servers in $S_\gamma$ remain mutually independent after the edge simplifications. Figure \ref{fig: graph structure} depicts the three bipartite graphs.

\begin{figure}
	\centering
	\begin{subfigure}{0.49\columnwidth}
		\centering
		\includegraphics{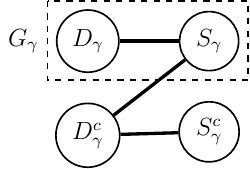}
		\subcaption{Bipartite graph $G$}
	\end{subfigure}
	\hfill
	\begin{subfigure}{0.49\columnwidth}
		\centering
		\includegraphics{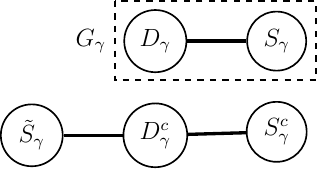}
		\subcaption{Bipartite graph $G_0$}
	\end{subfigure}
	\caption{Schematic view of the bipartite graphs $G$, $G_0$ and $G_\gamma$. The circles represent sets of nodes and a thick line between two sets of nodes indicates that there may exist edges between the two sets. The set $\tilde{S}_\gamma$ is the set of servers incorporated through the edge simplifications. The potential departure processes of servers in $S_\gamma$ are coupled with those of servers in $\tilde{S}_\gamma$ but are mutually independent over $S_\gamma$.}
	\label{fig: graph structure}
\end{figure}

Let $\bX_0$ and $\bX_\gamma$ be the load balancing processes associated with the bipartite graphs $G_0$ and $G_\gamma$, respectively, for the constant rate functions such that all the dispatchers have arrival rate $\lambda_0$ and all the servers have service rate $\mu_0$. It follows from \eqref{eq: rate bounds}, the ergodicity of $\bX$ and Proposition \ref{prop: monotonicity property} that $\bX_0$ is ergodic. The processes $\sett{\bX_0(u)}{u \in S_\gamma}$ and $\bX_\gamma$ are continuous-time Markov chains with the same transition rates since $G_\gamma$ and $G_0 \setminus G_\gamma$ are disjoint, and in particular $\bX_\gamma$ is ergodic as well. If we let $X$, $X_0$ and $X_\gamma$ be the stationary distributions of $\bX$, $\bX_0$ and $\bX_\gamma$, respectively, then Proposition \ref{prop: monotonicity property} further implies that
\begin{equation*}
	\cprob*{X(u) \geq i} \geq \cprob*{X_0(u) \geq i} = \cprob*{X_\gamma(u) \geq i} \quad \text{for all} \quad i \geq 0 \quad \text{and} \quad u \in S_\gamma.
\end{equation*}

Let $q_\gamma$ denote the steady-state occupancy for $\bX_\gamma$ and recall that the potential departure processes of the servers in $S_\gamma$ remain mutually independent after the edge simplifications. Therefore, we may apply Theorem \ref{the: first lower bound} to $\bX_\gamma$, which yields:
\begin{equation*}
	\frac{1}{|S_\gamma|}\sum_{u \in S_\gamma} \cprob{X(u) \geq i} \geq \frac{1}{|S_\gamma|}\sum_{u \in S_\gamma} \cprob{X_\gamma(u) \geq i} = \expect*{q_\gamma(i)} \geq \frac{\left[r(\rho_0, \alpha_{G_\gamma})\right]^i}{\alpha_{G_\gamma}}
\end{equation*}
for all $i \geq 1 / \rho_0$. Moreover, the degree in $D_\gamma$ is upper bounded by $\ceil{\gamma - 1}$, i.e., the smallest integer larger than or equal to $\gamma - 1$. Thus, $\alpha_{G_\gamma} \leq \ceil{\gamma - 1}$ and Lemma \ref{lem: properties of r} implies that
\begin{equation}
	\label{eq: lower bound using ceiling}
	\frac{1}{|S_\gamma|}\sum_{u \in S_\gamma} \cprob{X(u) \geq i} \geq \frac{\left[r(\rho_0, \ceil{\gamma - 1})\right]^i}{\ceil{\gamma - 1}} \quad \text{for all} \quad i \geq \frac{1}{\rho_0}.
\end{equation}

Applying Markov's inequality to the uniform distribution over $D$, we obtain
\begin{equation*}
	\frac{|D_\gamma^c|}{|D|} = \frac{1}{|D|}\sum_{d \in D} \ind{\deg(d) \geq \gamma} \leq \frac{\beta_G}{\gamma} \quad \text{and hence} \quad \frac{|D|}{|D_\gamma|} \leq \frac{\gamma}{\gamma - \beta_G}.
\end{equation*}
Since $\bX$ is ergodic, \eqref{eq: ergodicity condition} and \eqref{eq: rate bounds} imply that $\rho_0 |D_\gamma| \leq |S_\gamma|$. Also,
\begin{equation*}
	\left|S_\gamma^c\right| \leq \left|E \cap \left(D \times S_\gamma^c\right)\right| \leq \left|E \cap \left(D_\gamma^c \times S\right)\right| = \sum_{d \in D_\gamma^c} \deg(d) \leq \beta_G|D|.
\end{equation*}
The first inequality uses the standing assumption that no sever is isolated, and the second inequality holds since $E \cap (D_\gamma \times S_\gamma^c) = \emptyset$ by definition of $S_\gamma$. Putting all together, we get
\begin{equation*}
	\frac{|S_\gamma^c|}{|S_\gamma|} \leq \frac{\beta_G|D|}{\rho_0|D_\gamma|} \leq \frac{\beta_G \gamma}{\rho_0(\gamma - \beta_G)}.
\end{equation*}

Recall that $q$ is the steady-state occupancy for $\bX$. By \eqref{eq: lower bound using ceiling} and the above inequality,
\begin{align*}
	\expect*{q(i)} &\geq \frac{1}{|S|}\sum_{u \in S_\gamma} \cprob*{X(u) \geq i} \\
	&= \frac{|S_\gamma|}{|S|}\frac{1}{|S_\gamma|}\sum_{u \in S_\gamma} \cprob*{X(u) \geq i} \\
	&\geq \frac{|S_\gamma|}{|S_\gamma| + |S_\gamma^c|}\frac{\left[r(\rho_0, \ceil{\gamma - 1})\right]^i}{\ceil{\gamma - 1}} \geq \frac{\rho_0(\gamma - \beta_G)}{\rho_0(\gamma - \beta_G) + \beta_G\gamma}\frac{\left[r(\rho_0, \ceil{\gamma - 1})\right]^i}{\ceil{\gamma - 1}}
\end{align*}
for all $i \geq 1 / \rho_0$ and $\gamma > \beta_G$. If we take $\gamma = \beta_G + 1$, then
\begin{equation*}
	\expect*{q(i)} \geq \frac{\rho_0}{\beta_G (\beta_G + 1) + \rho_0}\frac{\left[r(\rho_0, \ceil{\beta_G})\right]^i}{\ceil{\beta_G}} \geq \frac{\rho_0}{\beta_G (\beta_G + 1) + \rho_0}\frac{\left[r(\rho_0, \beta_G + 1)\right]^i}{\beta_G + 1}
\end{equation*}
for all $i \geq  1 / \rho_0$, where the last inequality holds by Lemma \ref{lem: properties of r} since $\rho_0 \leq \ceil{\beta_G} \leq \beta_G + 1$. This completes the proof of Theorem \ref{the: second lower bound}.

\section{Conclusion}
\label{sec: conclusion}

We have derived geometric lower bounds for the expected steady-state occupancy in processing networks, which are based on novel metrics that capture the degree of flexibility for distributing incoming tasks over the network. Furthermore, we have established that the geometric decay of the mean steady-state occupancy cannot be avoided even in the large-scale limit unless the flexibility metrics diverge. In particular, diverging flexibility is a necessary condition for mean-field limits in the literature showing that growing processing networks perform asymptotically as the classic Power-of-$d$ or JSQ policies. Thus, flexibility must always be considered in the design of processing networks at scale.


\begin{appendices}
	
\section{Additional proofs}
\label{app: proofs of several results}

\begin{proof}[Proof of Proposition \ref{prop: simple load balancing processes}]
	Let $\bY$ be the number of tasks in a single-server queue with Poisson arrivals at rate $\lambda$ and exponentially distributed service times with rate $\mu |S|$. The proof is based on coupling the stochastic processes $\bX$ and $\bY$ as follows.
	
	Suppose that both systems have the same arrival process and complete tasks according to potential departure processes, i.e., a server completes a task if and only if it has a potential departure while its queue is not empty. Some server in the simple load balancing system has a potential departure if and only if the single-server queue has a potential departure, and each server of the simple load balancing system is equally likely to have a potential departure when the single-server queue has a potential departure. In particular, the potential departure processes of the servers in the simple load balancing system are thinnings of the potential departure process associated with the single-server queue. It is clear that this construction does not change the laws of $\bX$ and $\bY$.
	
	It follows from the above construction that
	\begin{equation*}
		\sum_{u \in S} \bX(t, u) \geq \bY(t) \quad \text{for all} \quad t \geq 0 \quad \text{if this holds for} \quad t = 0;
	\end{equation*}
	this property holds for all sample paths of $(\bX, \bY)$. Because $\bX$ is ergodic, $\lambda < \mu |S|$ and thus $\bY$ is ergodic as well. We conclude that the stationary distributions $X$ and $Y$ of the continuous-time Markov chains $\bX$ and $\bY$, respectively, exist, are unique and satisfy:
	\begin{equation*}
		\cprob*{\sum_{u \in S} X(u) \geq i} \geq \cprob*{Y \geq i} \quad \text{for all} \quad i \in \N.
	\end{equation*}
	
	The above inequality implies that
	\begin{equation*}
		P\left(Y \geq |S|i\right) \leq P\left(\sum_{u \in S} X(u) \geq |S|i\right) \leq P\left(\bigcup_{u \in S} \left\{X(u) \geq i\right\}\right) \leq |S|\cprob*{X(v) \geq i},
	\end{equation*}
	where $v \in S$ can be any server. Recalling that $\rho = \lambda / \mu$, we obtain:
	\begin{equation*}
		\cprob*{X(u) \geq i} \geq \frac{1}{|S|}P\left(Y \geq |S|i\right) = \frac{1}{|S|}\left(\frac{\rho}{|S|}\right)^{|S|i} = \frac{\left[r(\rho, |S|)\right]^i}{|S|} \quad \text{for all} \quad u \in S.
	\end{equation*}
	The claim follows directly from this inequality.
\end{proof}

\begin{proof}[Proof of Proposition \ref{prop: monotonicity property}]
	The inequalities \eqref{eq: monotonicity for rate modifications} and \eqref{eq: monotonicity for edge simplification} are proved in \cite[Section 7.2]{goldsztajn2024server}. For the claim about the stationary distributions, observe that \eqref{eq: monotonicity for rate modifications} and \eqref{eq: monotonicity for edge simplification} imply that the state where all the queues are empty is positive recurrent for $\bX_2$ if this holds for $\bX_1$. Since both processes are irreducible, $\bX_2$ is ergodic if $\bX_1$ is. Suppose that both processes are ergodic and denote their stationary distributions by $X_1$ and $X_2$, respectively, then
	\begin{equation*}
		\cprob*{X_j(u) \geq i} = \lim_{T \to \infty} \frac{1}{T}\int_0^T \cprob*{\bX_j(t, u) \geq i}dt \quad \text{for} \quad j \in \{1, 2\}, \quad u \in S_j \quad \text{and} \quad i \in \N.
	\end{equation*}
	Therefore, \eqref{eq: monotonicity for rate modifications} and \eqref{eq: monotonicity for edge simplification} also hold if we replace $\bX_j(t, u)$ by $X_j(u)$ for $j \in \{1, 2\}$.
\end{proof}

\begin{proof}[Proof of Lemma \ref{lem: properties of r}]
	First observe that
	\begin{equation*}
		f'(x) = f(x)g(x) \quad \text{with} \quad g(x) \defeq k\log \rho - k \log x - k - \frac{1}{x} \quad \text{for all} \quad x > 0.
	\end{equation*}
	Clearly, $g(\rho) < 0$. Also, $g$ is nonincreasing in $[\rho, \infty)$ since $\rho \geq 1 / k$ by assumption and
	\begin{equation*}
		g'(x) = \frac{1}{x^2} - \frac{k}{x} = \frac{1}{x}\left(\frac{1}{x} - k\right) \quad \text{for all} \quad x > 0.
	\end{equation*}
	Hence, $g(x) < 0$ for all $x \geq \rho$, and we conclude that $f$ is strictly decreasing in $[\rho, \infty)$.
	
	Now observe that the second derivative of $f$ is given by
	\begin{align*}
		f''(x) &= f(x)g'(x) + f(x)\left[g(x)\right]^2 \\
		&= \left[\frac{1}{x^2} - \frac{k}{x} + \left(k\log x - c\right)^2 + \frac{2\left(k\log x - c\right)}{x} + \frac{1}{x^2}\right]f(x) \quad \text{for all} \quad x > 0,
	\end{align*} 
	where we write $c \defeq k\log \rho - k$ for brevity. In particular, we obtain:
	\begin{equation*}
		\frac{x^2f''(x)}{f(x)} = \left(k\log x - c\right)^2x^2 + \left(2k\log x - 2c - k\right)x + 2 \quad \text{for all} \quad x > 0.
	\end{equation*}
	
	Suppose that $f''(x_0) = 0$ for some $x_0 > 0$, and consider the functions defined as:
	\begin{equation*}
		a(x) \defeq \left(k\log x - c\right)^2 \quad \text{and} \quad b(x) \defeq 2k\log(x) - 2c - k \quad \text{for all} \quad x > 0.
	\end{equation*}
	If $a(x_0) = 0$, then $x_0 = \e^{c / k} = \rho / \e$ and $b(x_0) = -k$. Therefore,
	\begin{equation*}
		\frac{\rho}{\e} = x_0 = \frac{-2}{b(x_0)} = \frac{2}{k}.
	\end{equation*}
	We conclude that $f''(x_0) = 0 = a(x_0)$ can only occur if $k \rho = 2 \e$ and $x_0 = \rho / \e < \rho$.
	
	Consider now the situation where $f''(x_0) = 0 < a(x_0)$. Then the quadratic polynomial $p(x) \defeq a(x_0) x^2 + b(x_0)x + 2$ has a root at $x_0$.  Assuming that $b(x_0) \geq 0$, we get
	\begin{equation*}
		0 < x_0 \leq \frac{-b(x_0) + \sqrt{\left[b(x_0)\right]^2 - 8a(x_0)}}{2a(x_0)} \leq 0,
	\end{equation*}
	which is a contradiction. Therefore, we must have
	\begin{equation*}
		2k\log\left(\frac{x_0}{\rho}\right) + k = b(x_0) < 0,
	\end{equation*}
	and this implies that $x_0 < \rho / \sqrt{e} < \rho$.
	
	The above arguments establish that $f''$ does not have any roots in $[\rho, \infty)$. In addition, $x^2 f''(x) / f(x) \to +\infty$ as $x \to +\infty$, which implies that $f''$ is positive for all large enough $x$. Since $f''$ is continous, we conclude that $f''$ is positive in $[\rho, \infty)$ and thus $f$ is convex.
\end{proof}

\end{appendices}
	
\newcommand{\noop}[1]{}
\bibliographystyle{IEEEtranS}
\bibliography{bibliography}

\begin{thebibliography}{10}
\providecommand{\url}[1]{#1}
\csname url@samestyle\endcsname
\providecommand{\newblock}{\relax}
\providecommand{\bibinfo}[2]{#2}
\providecommand{\BIBentrySTDinterwordspacing}{\spaceskip=0pt\relax}
\providecommand{\BIBentryALTinterwordstretchfactor}{4}
\providecommand{\BIBentryALTinterwordspacing}{\spaceskip=\fontdimen2\font plus
\BIBentryALTinterwordstretchfactor\fontdimen3\font minus
  \fontdimen4\font\relax}
\providecommand{\BIBforeignlanguage}[2]{{%
\expandafter\ifx\csname l@#1\endcsname\relax
\typeout{** WARNING: IEEEtranS.bst: No hyphenation pattern has been}%
\typeout{** loaded for the language `#1'. Using the pattern for}%
\typeout{** the default language instead.}%
\else
\language=\csname l@#1\endcsname
\fi
#2}}
\providecommand{\BIBdecl}{\relax}
\BIBdecl

\bibitem{van2018scalable}
M.~{\noop{Boor}}van~der Boor, S.~C. Borst, J.~S.~H. van Leeuwaarden, and
  D.~Mukherjee, ``Scalable load balancing in networked systems: A survey of
  recent advances,'' \emph{SIAM Review}, vol.~64, no.~3, pp. 554--622, 2022.

\bibitem{budhiraja2019supermarket}
A.~Budhiraja, D.~Mukherjee, and R.~Wu, ``Supermarket model on graphs,''
  \emph{The Annals of Applied Probability}, vol.~29, no.~3, pp. 1740--1777,
  2019.

\bibitem{foss1998stability}
S.~G. Foss and N.~I. Chernova, ``On the stability of a partially accessible
  multi-station queue with state-dependent routing,'' \emph{Queueing Systems},
  vol.~29, pp. 55--73, 1998.

\bibitem{gast2015power}
N.~Gast, ``The power of two choices on graphs: the pair-approximation is
  accurate?'' \emph{ACM SIGMETRICS Performance Evaluation Review}, vol.~43,
  no.~2, pp. 69--71, 2015.

\bibitem{goldsztajn2024server}
D.~Goldsztajn, S.~C. Borst, and J.~S. Van~Leeuwaarden, ``Server saturation in
  skewed networks,'' \emph{Proceedings of the ACM on Measurement and Analysis
  of Computing Systems}, vol.~8, no.~2, pp. 1--37, 2024.

\bibitem{lu2011join}
Y.~Lu, Q.~Xie, G.~Kliot, A.~Geller, J.~R. Larus, and A.~Greenberg,
  ``{Join-Idle-Queue}: A novel load balancing algorithm for dynamically
  scalable web services,'' \emph{Performance Evaluation}, vol.~68, no.~11, pp.
  1056--1071, 2011.

\bibitem{menich1991optimality}
R.~Menich and R.~F. Serfozo, ``Optimality of routing and servicing in dependent
  parallel processing systems,'' \emph{Queueing Systems}, vol.~9, pp. 403--418,
  1991.

\bibitem{mitzenmacher2001power}
M.~Mitzenmacher, ``The power of two choices in randomized load balancing,''
  \emph{IEEE Transactions on Parallel and Distributed Systems}, vol.~12,
  no.~10, pp. 1094--1104, 2001.

\bibitem{mukherjee2018asymptotically}
D.~Mukherjee, S.~C. Borst, and J.~S.~H. van Leeuwaarden, ``Asymptotically
  optimal load balancing topologies,'' \emph{Proceedings of the ACM on
  Measurement and Analysis of Computing Systems}, vol.~2, no.~1, pp. 1--29,
  2018.

\bibitem{mukherjee2018universality}
D.~Mukherjee, S.~C. Borst, J.~S.~H. van Leeuwaarden, and P.~A. Whiting,
  ``Universality of power-of-$d$ load balancing in many-server systems,''
  \emph{Stochastic Systems}, vol.~8, no.~4, pp. 265--292, 2018.

\bibitem{rutten2022load}
D.~Rutten and D.~Mukherjee, ``Load balancing under strict compatibility
  constraints,'' \emph{Mathematics of Operations Research}, vol.~48, no.~1, pp.
  227--256, 2023.

\bibitem{rutten2024mean}
------, ``Mean-field analysis for load balancing on spatial graphs,'' \emph{The
  Annals of Applied Probability}, vol.~34, no.~6, pp. 5228--5257, 2024.

\bibitem{sparaggis1993extremal}
P.~D. Sparaggis, D.~Towsley, and C.~Cassandras, ``Extremal properties of the
  shortest/longest non-full queue policies in finite-capacity systems with
  state-dependent service rates,'' \emph{Journal of Applied Probability},
  vol.~30, pp. 223--236, 1993.

\bibitem{stolyar2015pull}
A.~L. Stolyar, ``Pull-based load distribution in large-scale heterogeneous
  service systems,'' \emph{Queueing Systems}, vol.~80, pp. 341--361, 2015.

\bibitem{vvedenskaya1996queueing}
N.~D. Vvedenskaya, R.~L. Dobrushin, and F.~I. Karpelevich, ``Queueing system
  with selection of the shortest of two queues: An asymptotic approach,''
  \emph{Problemy Peredachi Informatsii}, vol.~32, no.~1, pp. 20--34, 1996.

\bibitem{weng2020boptimal}
W.~Weng, X.~Zhou, and R.~Srikant, ``Optimal load balancing with locality
  constraints,'' \emph{Proceedings of the ACM on Measurement and Analysis of
  Computing Systems}, vol.~4, no.~3, pp. 1--37, 2020.

\bibitem{zhao2023optimal}
Z.~Zhao and D.~Mukherjee, ``Optimal rate-matrix pruning for large-scale
  heterogeneous systems,'' \emph{Queueing Systems}, vol. 110, no.~1, p.~16,
  2026.

\bibitem{zhao2022exploiting}
Z.~Zhao, D.~Mukherjee, and R.~Wu, ``Exploiting data locality to improve
  performance of heterogeneous server clusters,'' \emph{Stochastic Systems},
  vol.~14, no.~3, pp. 229--272, 2024.

\end{thebibliography}
	
\end{document}